\documentclass[11pt,reqno]{amsart}
\setlength{\hoffset}{-.5in}
\setlength{\voffset}{-.25in}
\usepackage{amssymb,latexsym}
\usepackage{graphicx}
\usepackage{fancyhdr,amssymb}
\usepackage{url}		

\textwidth=6.175in
\textheight=8.5in

\theoremstyle{plain}
\numberwithin{equation}{section}
\newtheorem{thm}{Theorem}[section]
\newtheorem{theorem}[thm]{Theorem}
\newtheorem{lemma}[thm]{Lemma}
\newtheorem{corollary}[thm]{Corollary}

\pagestyle{fancy}

\begin{document}
\fancyhead{}
\renewcommand{\headrulewidth}{0pt}
\fancyfoot{}
\fancyfoot[LE,RO]{\medskip \thepage}
\fancyfoot[LO]{\medskip MONTH YEAR}
\fancyfoot[RE]{\medskip VOLUME , NUMBER }

\setcounter{page}{1}

\title[Subsequences and Divisibility by Powers of the Fibonacci Numbers]{Subsequences and Divisibility by Powers of the Fibonacci Numbers}
\author{Kritkhajohn Onphaeng} 
\address{Department of Mathematics and Statistics, Faculty of Science\\
                Prince of Songkla University\\
                Songkla\\
               91000, Thailand}
\email{dome3579@gmail.com}
\author{Prapanpong Pongsriiam} 
\address{Department of Mathematics, Faculty of Science\\
                Silpakorn University\\
                Nakornpathom\\
               73000, Thailand}
\email{prapanpong@gmail.com}
\thanks{The second author receives financial support from Faculty of Science, Silpakorn University, Thailand, contract number RGP 2555-07. We would like to thank C. Panraksa, A. Tangboonduangjit, and K. Wiboonton for giving us their documents \cite{TW1, TW} which lead to the publication of this article. We also wish to thank the anonymous referee for his or her suggestions which improve the presentation of this article.\\
Correspondence should be addressed to Prapanpong Pongsriiam: prapanpong@gmail.com}

\begin{abstract}
Let $F_n$ be the $n$th Fibonacci number. Let $m, n$ be positive integers. Define a sequence $(G(k,n,m))_{k\geq 1}$ by $G(1,n,m) = F^m_n$, and $G(k+1,n,m) = F_{nG(k,n,m)}$ for all $k\geq 1$. We show that $F_n^{k+m-1}\mid G(k,n,m)$ for all $k, m, n\in\mathbb N$. Then we calculate $\frac{G(k,n,m)}{F_n^{k+m-1}}\pmod{F_n}$.
\end{abstract}

\maketitle

\section{Introduction}
The Fibonacci sequence is defined by the recurrence relation $F_1=F_2=1$ and $F_n=F_{n-1}+F_{n-2}$ for $n\geq3$. These numbers are famous for possessing wonderful properties, see \cite{BQ, KM, Ko, Va} for additional references and history. The Fibonacci Association was formed in order to provide enthusiasts an opportunity to exchange ideas about Fibonacci numbers and related topics. In this article, we would like to share an idea on the divisibility properties of the sequence $(G(k,n,m))_{k\geq 1}$ defined as follows: for each $n, m\geq 1$, let 
$$
G(1,n,m) = F^m_n\;\text{and let}\;G(k+1,n,m) = F_{nG(k,n,m)}\;\text{for all $k\geq 1$}.
$$ 
For example, the first few terms of this sequence are $F_n^m$, $F(nF_n^m)$, $F(nF(nF_n^m))$, where we write $F(\ell)$ instead of $F_\ell$ for convenience. So $(G(k,n,m))_{k\geq 2}$ (neglecting the term $k=1$) is a subsequence of Fibonacci numbers. Our work is motivated by the results in \cite{TW1, TW}. Recall that for integers $d\geq 2$, $k\geq 0$, and $a\geq 1$ we say that $d^k$ exactly divides $a$ and write $d^k\parallel a$ if $d^k\mid a$ and $d^{k+1}\nmid a$. Tangboonduangjit and Wiboonton \cite{TW} show that $F_n^k\mid G(k,n,1)$ for all $n, k\in\mathbb N$. Then Panraksa, Tangboonduangjit, and Wiboonton \cite{TW1} show that $F_n^k\parallel G(k,n,1)$ for all $n\geq 4$, $k\geq 1$. They \cite{TW1} also calculate $\frac{G(k,n,1)}{F_n^k}\pmod{F_n}$ for $k=2$ and $k=3$. In this article, we extend their result \cite{TW1, TW} by showing that $F_n^{k+m-1}\mid G(k,n,m)$ for every $k, m, n\in\mathbb N$. We also calculate $\frac{G(k,n,m)}{F_n^{k+m-1}}\pmod{F_n}$ for all $k\geq 2$ and $m\geq 1$. The precise statements are as follows:
\begin{theorem}\label{mainthm}
The following statements hold:
\begin{itemize}
\item[(i)] $F_n^{k+m-1}\mid G(k,n,m)$ for every $k, m, n \geq 1$.
\item[(ii)] Let $k\geq2$ and $n\geq 3$. Then 
$$
\frac{G(k,n,m)}{F_n^{k+m-1}} \equiv
 \begin{cases}
 1, &\text{if $2\mid k$ and $3\nmid n$ or if $2\nmid k$, $3\nmid n$, and $4\nmid n$};\\
 F_{n-1}, &\text{if $2\nmid k$, $3\nmid n$ and, $4\mid n$};\\
 \left(\frac{F_{n-3}}{2}\right)^{k-1}, &\text{if $2\nmid k$, $3\mid n$, and $m\geq 2$ or} \\
&\quad \text{if $2\mid k$, $3\mid n$ and $m=1$};\\
 (-1)^n\left(\frac{F_{n-3}}{2}\right)^{k-1}, &\text{if $2\mid k$, $3\mid n$, and $m\geq 2$ or}\\
&\quad \text{if $2\nmid k$, $3\mid n$, and $m=1$},
 \end{cases}
$$
 where the congruence is taken modulo $F_n$. 
\end{itemize}
\end{theorem}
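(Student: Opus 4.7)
The plan is to prove both parts by induction on $k$, anchored on the expansion
$$
F_{na}=\sum_{i=1}^{a}\binom{a}{i} F_n^i\, F_{n-1}^{a-i}\, F_i, \qquad n,a\geq 1,
$$
which follows by induction on $a$ from $F_{s+t}=F_s F_{t+1}+F_{s-1} F_t$. For (i), the base $k=1$ is trivial, and the inductive step reduces to the lemma: if $F_n^j\mid a$, then $F_n^{j+1}\mid F_{na}$. I would deduce the lemma from the expansion by examining each summand. The explicit factor $F_n^i$ provides $i$ copies of $F_n$, and the identity $i\binom{a}{i}=a\binom{a-1}{i-1}$ together with $\gcd(F_{n-1},F_n)=1$ shows that any shortage of $F_n$-divisibility in $\binom{a}{i}$ at a prime $p\mid F_n$ comes only from $i!$ and is compensated by $F_n^{i-1}$ or by $F_i$ (whose $p$-adic valuation for $p=2,3$ is well-known).

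For (ii), write $A=G(k,n,m)=F_n^{k+m-1}\tilde c_k$ with $\tilde c_k\equiv c_k:=G(k,n,m)/F_n^{k+m-1}\pmod{F_n}$; the base case is $c_1=1$. Apply the expansion to $F_{nA}$ and divide by $F_n^{k+m}$. A prime-by-prime check shows that only the summands $i=1$ and $i=2$ can contribute a nonzero residue modulo $F_n$. The $i=1$ term always contributes $T_1/F_n^{k+m}\equiv c_k F_{n-1}^{A-1}\pmod{F_n}$. The $i=2$ term contributes precisely when $3\mid n$ (i.e.\ $2\mid F_n$) and $\tilde c_k$ is odd, in which case $T_2/F_n^{k+m}\equiv -(F_n/2)F_{n-1}^{A-2}\pmod{F_n}$. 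All $T_i$ with $i\geq 3$ vanish modulo $F_n$; for instance, the factor $F_3=2$ in $T_3$ neutralises the $3$ in the denominator of $\binom{A}{3}$ when $3\mid F_n$ (i.e.\ $4\mid n$), and analogous cancellations handle higher $i$.

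To finish, I would simplify the powers of $F_{n-1}$ via $F_{n-1}^2\equiv(-1)^n\pmod{F_n}$, which follows from the Catalan identity $F_{n-1}F_{n+1}-F_n^2=(-1)^n$ together with $F_{n+1}\equiv F_{n-1}\pmod{F_n}$; thus $F_{n-1}^e\pmod{F_n}$ depends only on $e\bmod 4$. In the induction I would track $A_k\bmod 4$ using: the parity of $F_n$ (which is controlled by $3\mid n$); whether $3\mid F_n$ (which is controlled by $4\mid n$ and determines the parity of $F_{n A_{k-1}}$); and the parity of $\tilde c_k$, which is determined by $c_k\bmod F_n$ whenever $2\mid F_n$. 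When $3\mid n$, combining the $T_1$ and $T_2$ contributions and invoking $F_{n-1}-F_n/2=F_{n-3}/2$ (a consequence of $F_n=2F_{n-2}+F_{n-3}$) produces the factor $F_{n-3}/2$ in the recurrence and ultimately the $(F_{n-3}/2)^{k-1}$ closed form. The resulting one-step recurrence $c_{k+1}\equiv\mu\, c_k\pmod{F_n}$ --- with $\mu$ equal to $1$, $F_{n-1}$, or $(-1)^n(F_{n-3}/2)$ according to the case --- is iterated from the appropriate initial value of $c_2$.

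The main obstacle is the $3\mid n$ case. The $T_2$ correction requires that the parity of $\tilde c_k$ match what the closed form predicts at every step; checking this forces the subsidiary split on $m=1$ versus $m\geq 2$ (because $A_1=F_n^m\bmod 4$ differs between these cases when $F_n\equiv 2\pmod 4$) and supplies the additional sign $(-1)^n$ in the formula through $F_{n-1}^{F_n^m-1}$.
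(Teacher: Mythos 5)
Your plan is essentially the paper's own proof: part (i) follows from the expansion $F_{nr}=\sum_{j=1}^{r}\binom{r}{j}F_n^{j}F_{n-1}^{r-j}F_j$ via the Hoggatt--Bicknell-Johnson divisibility lemma (Lemma \ref{lemmahoggatt1}), and part (ii) proceeds by induction on $k$ using the same expansion, the fact that all summands with $j\ge 3$ vanish modulo $F_n$ (Lemma \ref{newlemma2}), the reduction $F_{n-1}^{2}\equiv(-1)^{n}\pmod{F_n}$, the identity $F_{n-1}-\frac{1}{2}F_n=\frac{1}{2}F_{n-3}$, and a parity/mod-$4$ analysis of $G(k,n,m)$ (Lemma \ref{newlemma4}), which is exactly the paper's route, merely phrased as a one-step multiplicative recurrence instead of the paper's six-case display. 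One parenthetical slip is worth noting: the factor $F_3=2$ cannot ``neutralise'' a $3$ in the denominator of $\binom{A}{3}$; the $j\ge 3$ terms die because $j\binom{A}{j}=A\binom{A-1}{j-1}$ and the surplus powers of $F_n$ dividing $A$ absorb the $p$-adic valuation of $j$ (the content of Lemmas \ref{newlemma1} and \ref{newlemma2}), which is the same mechanism you already invoke for part (i), so the overall argument is unaffected.
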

Substituting $k=2, 3$ and $m=1$ in Theorem \ref{mainthm}, we obtain the results in \cite{TW}. With a bit more work, we obtain $F_n^{k+m-1}\parallel G(k,n,m)$ for every $k\geq 2$, $n\geq 4$, and $m\geq 1$, which also extends the results in \cite{TW1, TW}. Let us record it in the next Corollary.
\begin{corollary}\label{maincor}
The following statements hold:
\begin{itemize}
\item[(i)] $F_n^{k+m-1}\parallel G(k,n,m)$ for every $k\geq 2$, $n \geq 4$, $m\geq 1$.
\item[(ii)] For $n\geq 3$, $\frac{G(2,n,1)}{F_n^2} = \frac{F_{nF_n}}{F_n^2} \equiv \begin{cases}
1\pmod{F_n}, &\text{if $3\nmid n$;}\\
\frac{1}{2}F_{n-3}\pmod{F_n}, &\text{if $3\mid n$}.
\end{cases}$
\item[(iii)] For $n\geq 3$, $\frac{G(3,n,1)}{F_n^3} = \frac{F\left(nF_{nF_n}\right)}{F_n^3} \equiv \begin{cases}
1\pmod{F_n}, &\text{if $3\nmid n$ and $4\nmid n$;}\\
F_{n-1}\pmod{F_n}, &\text{if $3\nmid n$ and $4\mid n$;}\\
(-1)^n\left(\frac{F_{n-3}}{2}\right)^2\pmod{F_n}, &\text{if $3\mid n$}.
\end{cases}$
\end{itemize}
\end{corollary}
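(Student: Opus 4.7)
The plan is to derive Corollary \ref{maincor} directly from Theorem \ref{mainthm}. Parts (ii) and (iii) are just the case-by-case specializations of Theorem \ref{mainthm}(ii) at $(k,m)=(2,1)$ and $(k,m)=(3,1)$ respectively. For $(k,m)=(2,1)$, only the subcases ``$2\mid k$ and $3\nmid n$'' and ``$2\mid k$, $3\mid n$, and $m=1$'' are active, producing the residues $1$ and $F_{n-3}/2$, exactly as in (ii). For $(k,m)=(3,1)$, only the subcases with $2\nmid k$ and $m=1$ are active, producing $1$, $F_{n-1}$, or $(-1)^n(F_{n-3}/2)^2$, exactly as in (iii). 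So (ii) and (iii) require no new work beyond identifying the active branch of the piecewise formula.

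For part (i), the divisibility $F_n^{k+m-1}\mid G(k,n,m)$ is Theorem \ref{mainthm}(i), and the new content is the non-divisibility $F_n^{k+m}\nmid G(k,n,m)$ under the hypotheses $k\geq 2$ and $n\geq 4$. The approach I would take is to show that in every subcase of Theorem \ref{mainthm}(ii) the residue class of $G(k,n,m)/F_n^{k+m-1}$ modulo $F_n$ is nonzero. For the $\equiv 1$ subcase this is immediate since $F_n\geq F_3=2>1$; for the $\equiv F_{n-1}$ subcase it follows from $\gcd(F_n,F_{n-1})=1$.

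The only subcases requiring actual argument are those with residue $\pm(F_{n-3}/2)^{k-1}$, which arise when $3\mid n$. Here I would invoke the classical identity $\gcd(F_a,F_b)=F_{\gcd(a,b)}$, giving $\gcd(F_{n-3},F_n)=F_3=2$. Writing $F_n=2(F_n/2)$ and $F_{n-3}=2(F_{n-3}/2)$, one obtains $\gcd(F_n/2,F_{n-3}/2)=1$, whence $F_n/2\nmid(F_{n-3}/2)^{k-1}$ provided $F_n/2>1$. But $n\geq 4$ together with $3\mid n$ forces $n\geq 6$, so $F_n\geq 8$ and $F_n/2\geq 4>1$. Consequently $F_n\nmid(F_{n-3}/2)^{k-1}$, finishing (i).

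The main (and essentially only) obstacle is verifying that the hypothesis $n\geq 4$ is precisely what rules out the exceptional value $n=3$ in the $(F_{n-3}/2)^{k-1}$ calculation, since at $n=3$ one has $F_{n-3}/2=F_0/2=0$ and the sharpness genuinely fails (for instance $G(2,3,1)=F_6=8=F_3^3$, which is divisible by $F_3^3$, not merely $F_3^2$). Once this boundary case is excluded, the coprime Fibonacci-gcd identity does all the work, and the Corollary follows as a straightforward bookkeeping consequence of Theorem \ref{mainthm}.
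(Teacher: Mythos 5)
Your proposal is correct and follows essentially the same route as the paper: parts (ii) and (iii) by substituting $(k,m)=(2,1)$ and $(3,1)$ into Theorem \ref{mainthm}(ii), and part (i) by checking each residue in Theorem \ref{mainthm}(ii) is nonzero modulo $F_n$, with the only substantive case $3\mid n$ handled via $\gcd(F_n,F_{n-3})=F_{\gcd(n,n-3)}=F_3=2$, hence $\gcd\left(\frac{F_n}{2},\frac{F_{n-3}}{2}\right)=1$, so $F_n\nmid\left(\frac{F_{n-3}}{2}\right)^{k-1}$ once $n\geq 4$ rules out $F_n/2=1$. The paper phrases this last step as a contradiction forcing $n=3$, while you bound $F_n/2>1$ directly; these are the same argument.
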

To prove Theorem \ref{mainthm}, we need a number of well known identities listed in the next section.
\section{Preliminaries and Lemmas}
Let $m, n$, and $r$ be positive integers. The following results are well known and will be used throughout this article :
\begin{equation}\label{pre1}
\gcd(F_m,F_n)=F_{\gcd(m,n)}.
\end{equation}
\begin{equation}\label{pre2}
F_{nr} = \sum_{j=1}^{r}{r\choose j}F_n^jF_{n-1}^{r-j}F_j.
\end{equation}
\begin{equation}\label{pre3}
\text{If $m\geq3$, then $F_m\mid F_n$ if and only if $m\mid n$}.
\end{equation}
\begin{equation}\label{pre3.5}
F_{n+1}F_{n-1}-F_n^2 = (-1)^n.
\end{equation}
\begin{equation}\label{pre4}
F_{n-1}^{2} \equiv F_{n+1}^{2} \equiv (-1)^{n} \pmod {F_n}.
\end{equation}
\begin{equation}\label{pre9}
F_{m+n} = F_{m+1}F_n + F_mF_{n-1}.
\end{equation}
\begin{equation}\label{pre5}
\text{If $m\equiv n\pmod 6$, then $F_m\equiv F_n\pmod 4$}.
\end{equation}
 For the reader's convenience, let us give references for the above identities. The relation (\ref{pre1}) and (\ref{pre3}) can be found in \cite[p. 197--198]{Ko}, (\ref{pre3.5}) is the Cassini's identity \cite[p. 74]{Ko}, (\ref{pre4}) follows from Cassini's identity and the fact that $F_{n+1}\equiv F_{n-1}\pmod {F_n}$. The identity (\ref{pre2}) is an important tool in proving our theorem and can be found in several articles such as \cite{BR, Ha, Ho}. The identity (\ref{pre9}) is proved in \cite[p. 294]{GK}. For the relation (\ref{pre5}), noticing that $F_0\equiv F_6$ and $F_1\equiv F_7\pmod 4$, we see from the recursive definition of the Fibonacci sequence that $(F_n)\mod 4$ has period 6. Therefore (\ref{pre5}) holds. The following is a consequence of (\ref{pre5}) which will be used repeatedly.
\begin{equation}\label{pre6}
\text{If}\;3\nmid n\; \text{and}\; 2\nmid n,\;\text{then}\;F_n\equiv 1\pmod 4.
\end{equation}
The proof of (\ref{pre6}) is as follows. Assume that $3\nmid n$ and $2\nmid n$. Then $n\equiv 1,5 \pmod 6$. Therefore $F_n\equiv F_1$, $F_5 \pmod 4$ by (\ref{pre5}). Since $F_1 = 1 \equiv 1$, $F_5 = 5 \equiv 1 \pmod 4$, we have $F_n \equiv 1 \pmod 4$. We refer the reader to \cite{BQ, KM, Ko, Le, R, Ro, Va, Vi} for more details and references. The next lemma is an important tool to prove our results.
\begin{lemma}\label{lemmahoggatt1}
\cite{BR, Ho} For each positive integer $n, r$, and $s$, if $F_n^{s-1}\mid r$, then $F_n^s\mid F_{nr}$.
\end{lemma}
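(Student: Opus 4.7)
My plan is to expand $F_{nr}$ via identity (\ref{pre2}) and show that each summand is individually divisible by $F_n^s$. Writing
$$F_{nr} = \sum_{j=1}^{r} \binom{r}{j} F_n^{j} F_{n-1}^{r-j} F_j,$$
I would first dispose of the terms with $j \geq s$, for which the factor $F_n^j$ alone already supplies the required $F_n^s$. The work then reduces to showing $F_n^{s-j} \mid \binom{r}{j} F_j$ for each $1 \leq j \leq s-1$, since (\ref{pre1}) yields $\gcd(F_n, F_{n-1}) = F_1 = 1$, so $F_n$ is coprime to $F_{n-1}^{r-j}$.

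For this reduced divisibility I would argue one prime at a time. Fix a prime $p$ dividing $F_n$, set $t = v_p(F_n) \geq 1$ (where $v_p$ denotes the $p$-adic valuation), and note that the hypothesis $F_n^{s-1} \mid r$ yields $v_p(r) \geq (s-1)t$. From the elementary binomial identity $j\binom{r}{j} = r\binom{r-1}{j-1}$ I would extract $v_p(\binom{r}{j}) \geq v_p(r) - v_p(j) \geq (s-1)t - v_p(j)$. The $p$-adic valuation of the $j$-th summand is then at least
$$(s-1)t - v_p(j) + jt + v_p(F_j) = st + (j-1)t + v_p(F_j) - v_p(j),$$
and the proof is complete provided $(j-1)t + v_p(F_j) - v_p(j) \geq 0$ for every $j \geq 1$.

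The main obstacle, and really the only nontrivial point, is that the $j!$ hidden in the denominator of $\binom{r}{j}$ can swallow powers of $p$. I would handle this by the crude estimate $v_p(j) \leq j-1$ for every $j \geq 1$, which follows from $p^{v_p(j)} \leq j$ together with the easy induction $2^{j-1} \geq j$. Since $t \geq 1$, this gives $v_p(j) \leq j-1 \leq (j-1)t$, so the required nonnegativity holds term by term, without even invoking $v_p(F_j) \geq 0$. Consequently every summand in the expansion of $F_{nr}$ has $p$-adic valuation at least $st$, which yields $v_p(F_{nr}) \geq v_p(F_n^s)$ for every prime $p \mid F_n$, and $F_n^s \mid F_{nr}$ follows.
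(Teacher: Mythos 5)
Your argument is correct, and it is worth noting that the paper itself gives no proof of Lemma \ref{lemmahoggatt1}: the result is quoted from Hoggatt and Bicknell-Johnson \cite{Ho} (see also Benjamin and Rouse \cite{BR}), so there is no internal proof to compare against. Still, your route is not far from the machinery the paper builds for its main theorem: its Lemma \ref{newlemma2} (proved via Lemma \ref{newlemma1}) is exactly a statement of the form ``$s^k\mid r$ implies $s^{k+\ell}\mid {r\choose j}s^j$,'' and taking $\ell=1$ with base $F_n$ and combining with (\ref{pre2}) would give Lemma \ref{lemmahoggatt1} at once; so your proof is essentially a $p$-adic-valuation rephrasing of that strategy, closer in flavor to Marques's approach via Lengyel's theorem \cite{Ma, Le}. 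Where the paper's Lemma \ref{newlemma1} produces, by a minimal-counterexample argument, a prime $p$ with $p^c\mid j$ and then derives the contradiction $j\geq p^c\geq 2^c>j$, you control the denominator directly through $v_p(j)\leq j-1$ (both hinge on the same inequality $j\leq 2^{j-1}$). Your estimates check out: $v_p\bigl({r\choose j}\bigr)\geq v_p(r)-v_p(j)\geq (s-1)t-v_p(j)$ from $j{r\choose j}=r{r-1\choose j-1}$, and since $t\geq 1$ the term-by-term valuation $st+(j-1)t+v_p(F_j)-v_p(j)$ is at least $st$, which, ranging over the primes dividing $F_n$ (the case $F_n=1$, i.e., $n\leq 2$, being trivial), gives $F_n^s\mid F_{nr}$. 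What your version buys is a short, self-contained proof of the lemma; what the paper's Lemma \ref{newlemma2} buys beyond it is the finer bookkeeping (the extra factor $s^{\ell}$ for $j\geq 3$) needed later in the proof of Theorem \ref{mainthm}(ii), which the present lemma does not require.
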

Note that Lemma \ref{lemmahoggatt1} was first proved in 1977 by V. E. Hoggatt, Jr., and Marjorie Bicknell-Johnson \cite{Ho} but it or its consequence has been proved again recently \cite{BR, Ma, ST} by a different method. It is worth noting that many authors \cite{BR, Ma, ST} do not seem to realize the existence of Lemma \ref{lemmahoggatt1} which was first proved in \cite{Ho}. For example, since $F_n^k\mid F_n^k$, we obtain from Lemma \ref{lemmahoggatt1} immediately that 
\begin{equation}\label{lostthmeq1}
F_n^{k+1}\mid F_{nF_n^k}.
\end{equation}
It was mentioned by D. Marques \cite[p. 241]{Ma} that to the best of his knowledge, (\ref{lostthmeq1}) was first proved by Benjamin and Rouse \cite[2006]{BR}, using a combinatorial approach, and a second proof of (\ref{lostthmeq1}) is due to Seibert and Trojovsky \cite[2008]{ST} by using mathematical induction together with an identity for $\frac{F_{nm}}{F_m}$. D. Marques \cite[2012]{Ma} himself also gave another proof of (\ref{lostthmeq1}) by applying Lengyel's Theorem \cite{Le} on the $p$-adic order of Fibonacci and Lucas numbers. So it is nice to bring Lemma \ref{lemmahoggatt1} back to the literature.
The following lemmas are of our own. We also prove Lemma \ref{newlemma1} and Lemma \ref{newlemma2} in \cite{Po} but our manuscript has not been formally published in a journal. So we give a proof here for completeness.
\begin{lemma}\label{newlemma1}
Let $a, j\geq 1$ and $s\geq 2$ be positive integers. Assume that $c$ is the smallest nonnegative integer such that $j\mid as^c$. Then there exists a prime $p$ such that $p\mid s$ and $p^c\mid j$.
\end{lemma}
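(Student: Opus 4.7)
The plan is to argue by a $p$-adic valuation argument, contrapositive in spirit. First I would dispose of the trivial case $c=0$: since $s\geq 2$, $s$ has at least one prime divisor $p$, and $p^0=1\mid j$ automatically, so the conclusion holds. From now on, assume $c\geq 1$.

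Next, I would set up the valuation bookkeeping. For each prime $q$, let $v_q(\cdot)$ denote the $q$-adic valuation. The hypothesis $j\mid as^c$ is equivalent to
\[
v_q(j)\leq v_q(a)+c\,v_q(s)\quad\text{for every prime }q,
\]
while the minimality of $c$ says that the analogous inequality fails with $c$ replaced by $c-1$ for at least one prime, i.e.\ there is some prime $q_0$ with $v_{q_0}(j)>v_{q_0}(a)+(c-1)v_{q_0}(s)$. Any such witness $q_0$ must satisfy $v_{q_0}(s)\geq 1$, since otherwise the condition $j\mid as^c$ would already force $v_{q_0}(j)\leq v_{q_0}(a)$.

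The core step is to prove by contradiction that some prime $p\mid s$ satisfies $v_p(j)\geq c$. Suppose instead that $v_p(j)\leq c-1$ for \emph{every} prime $p\mid s$. I would then verify that $j\mid as^{c-1}$ by checking each prime separately: for primes $p\mid s$ we have $v_p(as^{c-1})\geq (c-1)v_p(s)\geq c-1\geq v_p(j)$; and for primes $q\nmid s$ we have $v_q(as^{c-1})=v_q(a)=v_q(as^c)\geq v_q(j)$ using $j\mid as^c$. Combining these two cases gives $j\mid as^{c-1}$, which contradicts the minimality of $c$. Therefore some prime $p\mid s$ has $v_p(j)\geq c$, i.e.\ $p^c\mid j$, as required.

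I do not anticipate a genuine obstacle; the argument is a straightforward valuation bookkeeping and the only mild care needed is the separation of the cases $c=0$ and $c\geq 1$, and the separation of primes dividing $s$ from primes coprime to $s$ inside the contradiction step.
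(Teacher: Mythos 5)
Your proof is correct, and it takes a cleaner route than the paper's. You argue contrapositively with $p$-adic valuations: if every prime $p\mid s$ had $v_p(j)\leq c-1$, then checking valuations prime by prime (primes dividing $s$ versus primes coprime to $s$) gives $j\mid as^{c-1}$, contradicting the minimality of $c$; hence some $p\mid s$ has $p^c\mid j$. The paper instead works with the cofactor $q$ defined by $as^c=jq$: minimality of $c$ shows $s\nmid q$, so there is a prime $p$ with $p^\alpha\parallel s$, $p^\beta\parallel q$ and $\alpha>\beta$, and the conclusion is obtained by explicitly exhibiting an integer $n=\frac{a\left(s/p^\alpha\right)^c}{q/p^\beta}\cdot p^{(\alpha-1)c-\beta}$ with $p^cn=j$. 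Both arguments hinge on the same minimality trick (dropping one factor of $s$ would contradict the choice of $c$), but yours locates the prime indirectly by contradiction and avoids the exponent bookkeeping $(\alpha-1)c-\beta\geq 0$ that the paper needs to verify $n\in\mathbb{N}$, so it is shorter and less error-prone; the paper's version is more constructive in that it identifies the prime concretely from the factorization of $s$ and $q$. One small remark: your paragraph about the witness prime $q_0$ with $v_{q_0}(j)>v_{q_0}(a)+(c-1)v_{q_0}(s)$ is correct but never used in the final contradiction argument, so you could delete it without loss.
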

\begin{proof}
If $c=0$, then we can choose any prime $p$ that divides $s$. So we assume that $c\geq 1$. Since $j\mid as^c$, there is $q\in\mathbb N$ such that $as^c = jq$. If $s\mid q$, then $as^{c-1} = j\left(\frac{q}{s}\right)$ and therefore $j\mid as^{c-1}$ which contradicts the minimality of $c$. Thus $s\nmid q$. Then there exists a prime $p$ such that $p^\alpha\parallel s$, $p^{\beta}\parallel q$, and $\alpha > \beta \geq 0$. Since $q\mid as^c$, we see that
\begin{equation}\label{inprove1}
\frac{q}{p^{\beta}}\mid a\left(\frac{s}{p^{\alpha}}\right)^c.
\end{equation}
Let $n = \frac{a\left(\frac{s}{p^\alpha}\right)^c}{\frac{q}{p^\beta}}\cdot p^{(\alpha-1)c-\beta}$. Then $p^cn = \frac{as^c}{q} = j$. Now by (\ref{inprove1}) and the fact that $\alpha > \beta$, we see that $n\in\mathbb{N}$, $p^c\mid j$, and $p\mid s$.
\end{proof}
\begin{lemma}\label{newlemma2}
Let $k, \ell, r, s \in\mathbb{N}$ and $s^k\mid r$. Then $s^{k+\ell}\mid {r\choose j}s^j $ for all $1\leq j\leq r$ satisfying $2^{j-\ell+1} > j$. In particular, $s^{k+2}\mid {r \choose j}s^j$ for all $3 \leq j \leq r$.
\end{lemma}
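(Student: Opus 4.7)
The plan is to reduce the target divisibility $s^{k+\ell}\mid\binom{r}{j}s^j$ to a single relation of the form ``$j$ divides $s^{j-\ell}\cdot(\text{something})$,'' and then invoke Lemma \ref{newlemma1} to control how many powers of $s$ the integer $j$ can absorb. The only identity I will use is the absorption formula $j\binom{r}{j} = r\binom{r-1}{j-1}$; combined with the hypothesis $s^k\mid r$, it recasts $\binom{r}{j}s^j$ in a form whose $s$-divisibility becomes transparent.

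Concretely, I would write $r = s^k t$ and multiply the identity by $s^j$ to obtain $j\binom{r}{j}s^j = s^{k+j}\, t\binom{r-1}{j-1}$. Since $s^k$ is a factor of both sides, the claim $s^{k+\ell}\mid\binom{r}{j}s^j$ is equivalent to $js^\ell\mid s^j\, t\binom{r-1}{j-1}$. The hypothesis $2^{j-\ell+1}>j\geq 1$ forces $j-\ell+1\geq 1$, that is $j\geq \ell$, so this further simplifies to
$$
j \,\Big|\, s^{j-\ell}\, t\binom{r-1}{j-1}.
$$

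To establish this last divisibility, I set $a = t\binom{r-1}{j-1}$. Since $j\binom{r}{j} = s^k a$ is already a multiple of $j$, the smallest nonnegative integer $c$ with $j\mid a s^c$ is well-defined (indeed $c\leq k$). Lemma \ref{newlemma1} then furnishes a prime $p$ with $p\mid s$ and $p^c\mid j$, whence $2^c\leq p^c\leq j$ and so $c\leq \log_2 j$. The inequality $2^{j-\ell+1}>j$ says precisely that $\log_2 j < j-\ell+1$; since $c$ is an integer this upgrades to $c\leq j-\ell$, giving $j\mid a s^{j-\ell}$ as required.

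For the ``in particular'' clause I would specialise $\ell=2$ and verify $2^{j-1}>j$ for all $j\geq 3$ by a trivial induction. The most delicate step to watch is the bookkeeping in the three successive reductions (cancelling $j$, then $s^k$, then $s^\ell$): each is an ``iff'' only once one checks that the relevant divisor is genuinely present on both sides, and this is where the assumption $j\geq\ell$ extracted from the hypothesis is crucial. Once that bookkeeping is secured, all of the number-theoretic content is carried by Lemma \ref{newlemma1}.
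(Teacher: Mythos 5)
Your proposal is correct and takes essentially the same route as the paper's proof: both rest on the absorption identity $j{r\choose j}=r{r-1\choose j-1}$ with $r=s^k b$, reduce the claim to $j\mid s^{j-\ell}b{r-1\choose j-1}$ (using that the hypothesis forces $j\ge\ell$), and deduce this from Lemma \ref{newlemma1} applied to the minimal $c$ together with the bound $2^c\le p^c\le j$ played against $2^{j-\ell+1}>j$; you argue directly where the paper argues by contradiction, which is only a cosmetic difference. The only point to add is the trivial case $s=1$ (handled separately at the start of the paper's proof), since Lemma \ref{newlemma1} requires $s\ge 2$, though there the conclusion is immediate.
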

\begin{proof}
Note that the result holds trivially when $s=1$ or $j\geq k+\ell$. So we assume that $s\geq 2$ and $1\leq j < k+\ell$ satisfying $2^{j-\ell+1} > j$. Since $s^k\mid r$, there is $b\in\mathbb{N}$ such that $r = s^kb$. Then
$$
{r\choose j} = \frac{r}{j}{r-1\choose j-1} = \frac{s^kb}{j}{r-1\choose j-1} = s^{k-j+\ell}\cdot\frac{s^{j-\ell}b{r-1\choose j-1}}{j}.
$$
Suppose for a contradiction that $j \nmid  s^{j-\ell}b{r-1\choose j-1}$. Since $j\mid s^kb{r-1\choose j-1}$, there exists the smallest nonnegative integer $c$ such that $j\mid s^cb{r-1\choose k-1}$. Then
\begin{equation}\label{inprove2}
j-\ell+1 \leq c \leq k.
\end{equation}
By Lemma \ref{newlemma1}, there exists a prime $p$ such that $p^c\mid j$. Now $j \geq p^c \geq 2^c \geq 2^{j-\ell+1} > j$, a contradiction. Hence $j \mid s^{j-\ell}b{r-1\choose j-1}$ and therefore $s^{k-j+\ell}\mid {r\choose j}$ as required.
\end{proof}
\begin{lemma}\label{newlemma4}
Let $m, n \geq 1$, $k \geq 2$, and $r = G(k,n,m).$ Then the following statements hold.
\begin{itemize}
\item[(i)] $2 \mid r$ if and only if $3\mid n$ or $4\mid n$.
\item[(ii)] If $2\nmid n$ and $3 \nmid n$, then $r \equiv 1\pmod 4$.
\item[(iii)] If $3\mid n$, then $r \equiv 0 \pmod 8$.
\end{itemize}
\end{lemma}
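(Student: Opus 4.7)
The plan is to induct on $k \geq 2$, handling (i), (ii), and (iii) simultaneously. The key observation is that for every $k \geq 2$ we may write $r = F_t$ with $t = n\,G(k-1,n,m)$, and applying (\ref{pre3}) with $F_3 = 2$, $F_4 = 3$, and $F_6 = 8$ respectively yields the translations $2 \mid F_\ell \Longleftrightarrow 3 \mid \ell$, $3 \mid F_\ell \Longleftrightarrow 4 \mid \ell$, and $8 \mid F_\ell \Longleftrightarrow 6 \mid \ell$. Thus each of the three claims becomes a divisibility statement about the index $t$, which is always a multiple of $n$.

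For the base case $k = 2$ we have $t = nF_n^m$. Part (i) follows from $3 \mid F_n \Longleftrightarrow 4 \mid n$. For (ii), the hypotheses $2 \nmid n$ and $3 \nmid n$ imply that $n$ is coprime to $6$ and, using $2 \mid F_n \Longleftrightarrow 3 \mid n$ and $3 \mid F_n \Longleftrightarrow 4 \mid n$ (together with $n$ odd), that $F_n$ is also coprime to $6$; hence $t$ is coprime to $6$ and (\ref{pre6}) gives $F_t \equiv 1 \pmod 4$. For (iii), $3 \mid n$ immediately gives $3 \mid t$, and $2 \mid t$ holds because either $n$ is even or else $3 \mid n$ forces $2 \mid F_n$, hence $2 \mid F_n^m$.

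For the inductive step, assume $k \geq 3$ and that the lemma holds for smaller indices. Since $k - 1 \geq 2$ we may further unfold $G(k-1,n,m) = F_{nG(k-2,n,m)}$, and (\ref{pre3}) again converts the question ``$3 \mid G(k-1,n,m)$'' into ``$4 \mid nG(k-2,n,m)$.'' A short case split on $n \bmod 4$ reduces this last divisibility to the parity (or, when $n$ is odd, the non-divisibility by $4$) of $G(k-2,n,m)$, which is provided by part (i) applied to $k - 2$ when $k \geq 4$ and by the explicit formula $G(1,n,m) = F_n^m$ when $k = 3$. Feeding this information back through the three translation rules above produces (i), (ii) (combined with (\ref{pre6})), and (iii) at level $k$. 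The main obstacle is bookkeeping rather than any deep calculation: one must know the parity of $G(k-2,n,m)$ simultaneously with that of $G(k-1,n,m)$, so the induction must be strong enough to carry both, with $k = 2$ together with the formula $G(1,n,m) = F_n^m$ serving as the bottom of the recursion.
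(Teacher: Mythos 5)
Your proposal is correct and follows essentially the same route as the paper: an induction on $k$ that uses (\ref{pre3}) to translate divisibility of $r=F_{nG(k-1,n,m)}$ by $2$, $3$, $8$ into divisibility of the index by $3$, $4$, $6$, together with (\ref{pre6}) for the mod-$4$ statement, with the two-level unfolding forcing extra base information at $k=2$ and $k=3$ (your use of $G(1,n,m)=F_n^m$ matches the paper's explicit $k=3$ base case). The only cosmetic differences are that you run one simultaneous strong induction for (i)--(iii) while the paper proves (i), then (ii), and calls (iii) ``similar,'' and that you make the $F_6=8$ translation explicit for (iii).
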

\begin{proof}
Throughout the proof, we will apply (\ref{pre3}) repeatedly for the case $m =3, 4$ so let us record it here again. 
$$
\text{$3\mid n$ if and only if $2\mid F_n$ and $4\mid n$ if and only if $3\mid F_n$.}
$$
We will prove (i) by induction on $k$. Consider the equivalence: 
\begin{align*}
2\mid G(2,n,m) &\Leftrightarrow F_3\mid F_{nF_n^m} \Leftrightarrow 3\mid nF_n^m \Leftrightarrow 3\mid n \vee 3\mid F_n\\
& \Leftrightarrow 3\mid n \vee F_4\mid F_n \Leftrightarrow 3\mid n \vee 4\mid n.
\end{align*}
This proves (i) when $k=2$. Next, we consider the case $k=3$.
\begin{align}\label{inprove3}
2\mid G(3,n,m) &\Leftrightarrow 3\mid nG(2,n,m) \Leftrightarrow 3\mid n \;\vee\; 3\mid G(2,n,m)\notag\\
& \Leftrightarrow 3\mid n \;\vee\; 4\mid nF_n^m.
\end{align}
If $4\mid nF_n^m$ and $3\nmid n$, then $(2,F_n) = 1$ which implies that $4\mid n$. So (\ref{inprove3}) is equivalent to the condition $3\mid n$ or $4\mid n$. This proves (i) for $k=3$. Now assume that $k \geq 3$ and (i) holds for $2, 3, \ldots, k$. Then
\begin{align}\label{inprove13.5}
2\mid G(k+1,n,m) &\Leftrightarrow 3\mid nG(k,n,m)\notag\\
&\Leftrightarrow 3\mid n \;\vee\; 3\mid G(k,n,m)\notag\\
&\Leftrightarrow 3\mid n \;\vee\; 4\mid nG(k-1,n,m).
\end{align}
If $2\mid G(k-1,n,m)$, we can apply the induction hypothesis to conclude that $3\mid n$ or $4\mid n$. If $2\nmid G(k-1,n,m)$, then $\gcd(4,G(k-1,n,m)) = 1$, and therefore $4\mid n$. So (\ref{inprove13.5}) is equivalent to $3\mid n$ or $4\mid n$. This shows that (i) holds for all $k \geq 2$. \\
For (ii), we assume that $2\nmid n$ and $3\nmid n$. Again we will prove that $r\equiv 1 \pmod 4$ by induction on $k$. Similar to the proof of (i), we see that $3\mid nF_n^m\Leftrightarrow 3\mid n$ or $4\mid n$ and $2\mid nF_n^m \Leftrightarrow 2\mid n$ or $3\mid n$. Since $2\nmid n$ and $3\nmid n$, we see that $3\nmid nF_n^m$ and $2\nmid nF_n^m$.
By (\ref{pre6}), $G(2,n,m) = F_{nF_n^m} \equiv 1\pmod 4$. The case $k=3$ can be obtained similarly. Next assume that $k\geq3$ and (ii) holds for $2, 3, \ldots, k$. Then
\begin{align}\label{inprove4}
3\mid nG(k,n,m)& \Leftrightarrow 3\mid n \;\vee\; 4\mid nG(k-1,n,m)\notag\\
&\Leftrightarrow 3\mid n \;\vee\; 4\mid n.
\end{align}
where the penultimate equivalence is done by applying (i). From (\ref{inprove4}) and the fact that $3\nmid n$ and $2\nmid n$, we see that $3\nmid nG(k,n,m)$. We also have by the induction hypothesis that $2\nmid G(k,n,m)$ and therefore $2\nmid nG(k,n,m)$. Since $3\nmid nG(k,n,m)$ and $2\nmid nG(k,n,m)$, we obtain by (\ref{pre6}) that $G(k+1,n,m) = F_{nG(k,n,m)} \equiv 1 \pmod 4$. This proves (ii). The proof of (iii) is similar. Hence the proof is complete.
\end{proof}
\section{Proof of The Main Results}
\noindent \textbf{Proof of Theorem \ref{mainthm}(i)}.\\[0.1cm]
Since $G(1,n,m) = F_n^m$, the result holds for $k=1$. If $k\geq 1$ and $F_n^{k+m-1}\mid G(k,n,m)$, then we obtain by Lemma \ref{lemmahoggatt1} that $F_n^{k+1+m-1}$ divides $F_{nG(k,n,m)} = G(k+1,n,m)$. Therefore the result follows by induction on $k$.\qed\\[0.2cm]
\noindent \textbf{Proof of Theorem \ref{mainthm}(ii)}.
\begin{itemize}
\item[\textbf{\underline{Case $m=1$}}]
Basis Step: We let $m=1$ and we will prove the statement by induction on $k$. The case $k=2$ is proved in \cite{TW1}. But we will prove it again here. Our proof is shorter and gives an idea on the proof of the induction step and the proof of the case $m\geq 2$. The statement we want to prove is as follows:
$$
\frac{F_{nF_n}}{F_n^2} \equiv
\begin{cases}
1\pmod{F_n},\quad&\text{if $3\nmid n$;}\\
\frac{1}{2}F_{n-3}\pmod{F_n},\quad&\text{if $3\mid n$}.
\end{cases}
$$
Let $r=F_n$, $A = F_{n-1}^{r-1}$, and $B = \frac{r(r-1)}{2}F_{n-1}^{r-2}$. Since $F_n\mid r$, we obtain by Lemma \ref{newlemma2} that $F_n^3\mid {r\choose j}F_n^j$ for all $3\leq j\leq r$. That is $F_n\mid {r\choose j}F_n^{j-2}$ for each $3\leq j\leq r$. Then by (\ref{pre2}), we have 
\begin{align*}
\frac{F_{nr}}{F_n^2} = \sum_{j=1}^r{r\choose j}F_n^{j-2}F_{n-1}^{r-j}F_j \equiv \sum_{j=1}^2{r\choose j}F_n^{j-2}F_{n-1}^{r-j}F_j \equiv A+B\pmod{F_n}.
\end{align*}
\begin{itemize}
\item [Case 1] Assume that $3\nmid n$. Then $n\equiv 1, 2, 4, 5\pmod 6$. If $n\equiv 1, 2, 5$, then $F_n\equiv F_1, F_2, F_5 \equiv 1\pmod 4$, by (\ref{pre5}). Therefore $F_n$ is odd and $\frac{r-1}{2} = \frac{F_n-1}{2}$ is even. This implies that 
$$
B = \left(\frac{r-1}{2}\right)F_nF_{n-1}^{r-2}\equiv 0\pmod{F_n},
$$
and 
$$
A = F_{n-1}^{r-1} = F_{n-1}^{2\left(\frac{r-1}{2}\right)} \equiv (-1)^{n\left(\frac{r-1}{2}\right)}\equiv 1\pmod{F_n},\;\text{by (\ref{pre4})}.
$$
Hence $\frac{F_{nr}}{F_n^2}\equiv A+B\equiv 1\pmod{F_n}$. In what follows, (\ref{pre4}) and (\ref{pre5}) may be applied without mentioning.
\item [Case 2] Assume that $3\mid n$. Then $n\equiv 0, 3\pmod 6$. If $n\equiv 0\pmod 6$, then $n$ is even, $F_n\equiv F_0\equiv 0\pmod 4$, and therefore 
$$
F_{n-1}^{r-2} \equiv (-1)^{n\left(\frac{r-2}{2}\right)} \equiv 1\pmod{F_n}.
$$
Similarly, if $n\equiv 3\pmod 6$, then $F_n \equiv F_3 \equiv 2\pmod 4$ so $\frac{r-2}{2} = \frac{F_n-2}{2}$ is even and $F_{n-1}^{r-2} \equiv 1\pmod{F_n}$. In any case, we have $F_{n-1}^{r-2}\equiv 1\pmod{F_n}$. Therefore
\begin{align*}
\frac{F_{nr}}{F_n^2} &\equiv A+B = F_{n-1}^{r-2}\left(F_{n-1}-\frac{1}{2}F_n+\frac{1}{2}F_n^2\right)\\
&= F_{n-1}^{r-2}\left(\frac{1}{2}F_{n-3}+\frac{1}{2}F_n^2\right) \equiv \frac{1}{2}F_{n-3}\pmod{F_n}.
\end{align*} 
\end{itemize}
This completes the proof of the basis step $k=2$. \\
\noindent Induction Step: Next let $k\geq 2$ and assume that the statement is true for $k$. Let $r=G(k,n,1)$. Consider
\begin{align*}
\frac{G(k+1,n,1)}{F_n^{k+1}} = \frac{F_{nr}}{F_n^{k+1}} = \sum_{j=1}^{r}{r \choose j}F_n^{j-k-1}F_{n-1}^{r-j}F_j.
 \end{align*}
Since $F_n^k\mid r$, we obtain by Lemma \ref{newlemma2} that $F_n^{k+2} \mid {r \choose j}F_n^j$ for each $3\leq j \leq r$. Therefore $F_n\mid {r\choose j}F_n^{j-k-1}$ for each $3\leq j\leq r$. So the above sum is congruent to $A+B$ modulo $F_n$ where $A = rF_n^{-k}F_{n-1}^{r-1}$, $B = {r\choose 2}F_n^{-k+1}F_{n-1}^{r-2}$. We have 6 cases to consider.
\begin{itemize}
\item [Case 1] Assume that $2\nmid k$, $3\nmid n$, and $4\nmid n$. By Lemma \ref{newlemma4}(i), $r$ is odd. Therefore 
$$
B = \left(\frac{r-1}{2}\right)\frac{r}{F_n^k}F_nF_{n-1}^{r-2}\equiv 0\pmod{F_n}.
$$
By the induction hypothesis, $\frac{r}{F_n^k} \equiv 1 \pmod{F_n}$. Therefore 
$$
A \equiv F_{n-1}^{r-1} \equiv \left(-1\right)^{n\left(\frac{r-1}{2}\right)}\pmod{F_n}.
$$
If $2\mid n,$ then $A \equiv 1\pmod{F_n}$. If $2\nmid n$, then by Lemma \ref{newlemma4}(ii), $r \equiv 1 \pmod 4$, which implies that $A \equiv 1 \pmod {F_n}$. In any case, $A\equiv 1\pmod {F_n}$ and hence $A+B \equiv1 \pmod {F_n}$.
\item [Case 2] Assume that $2\mid k$, $3\nmid n$, and $4\nmid n$. Then $r$ is odd, by Lemma \ref{newlemma4}(i). The calculation of $A+B \pmod{F_n}$ is the same as that in Case 1 and we obtain $A+B \equiv 1 \pmod{F_n}$.
\item [Case 3] Suppose that $2\nmid k$, $3\nmid n$, and $4\mid n$. Then $r$ is even by Lemma \ref{newlemma4}(i). We write $2(A+B)=\frac{r}{F_n^k}F_{n-1}^{r-2}(2F_{n-1}+(r-1)F_n)$. By the induction hypothesis, 
$$
\frac{r}{F_n^k}\equiv F_{n-1}\pmod{F_n}.
$$
Since $r$ and $n$ are even, we obtain $F_{n-1}^{r-2}\equiv(-1)^{n(\frac{r-2}{2})}\equiv 1 \pmod{F_n}$. Therefore
\begin{align*}
2(A+B)\equiv F_{n-1}(2F_{n-1}) \equiv 2F_{n-1}^2\equiv 2(-1)^n \equiv 2 \pmod{F_n}.
\end{align*}
Since $3\nmid n$, we see that $(2,F_n)=1$ and thus $A+B \equiv 1 \pmod{F_n}$.
\item [Case 4] Suppose that $2\mid k$, $3\nmid n$, and $4\mid n$. Then by Lemma \ref{newlemma4}(i), $r$ is even. The calculation of $A+B\pmod{F_n}$ is similar to the one in Case 3 except that, in this case, $\frac{r}{F_n^k}\equiv 1 \pmod{F_n}$. We obtain $A+B\equiv F_{n-1}\pmod{F_n}$.
\item [Case 5] Assume that $2\nmid k$ and $3\mid n$. Then $F_n$ is even and, by Lemma \ref{newlemma4}(iii), $r\equiv 0 \pmod 8$. We write 
$$
A+B=\frac{r}{F_n^k}F_{n-1}^{r-2}\left(F_{n-1}+\frac{r-1}{2}F_n\right).
$$
We have $\frac{r}{F_n^k}\equiv(-1)^n\left(\frac{F_{n-3}}{2}\right)^{k-1}\pmod{F_n}$, by the induction hypothesis. In addition $F_{n-1}^{r-2}\equiv(-1)^{n\left(\frac{r-2}{2}\right)} \equiv (-1)^n\pmod{F_n}$ and $F_{n-1}+\frac{r-1}{2}F_n = F_{n-1}-\frac{F_n}{2}+\frac{rF_n}{2} = \frac{1}{2}F_{n-3}+\frac{rF_n}{2}\equiv\frac{1}{2}F_{n-3}\pmod{F_n}$. Therefore $A+B \equiv \left(\frac{F_{n-3}}{2}\right)^k \pmod{F_n}$.
\item [Case 6] Suppose that $2\mid k$ and $3\mid n$. Then $F_n$ is even and, by Lemma \ref{newlemma4}(iii), $r\equiv 0\pmod 8$. Similar to Case 5, we obtain the following  
\begin{align*}
\frac{r}{F_n^k} &\equiv \left(\frac{F_{n-3}}{2}\right)^{k-1},\\
F_{n-1}^{r-2} &\equiv (-1)^{n\left(\frac{r-2}{2}\right)} \equiv (-1)^n\quad\text{and}\\
F_{n-1}+\frac{r-1}{2}F_n &\equiv \frac{1}{2}F_{n-3},
\end{align*}
where all congruences are taken modulo $F_n$. Therefore $A+B \equiv (-1)^n\left(\frac{F_{n-3}}{2}\right)^k \pmod{F_n}$.
\end{itemize}
Combining the results in Case 1 to Case 6, we obtain
$$
\frac{G(k+1,n,1)}{F_n^{k+1}} \equiv
 \begin{cases}
 1, &\text{if $2\nmid k$ and $3\nmid n$ or if $2\mid k$ and}\\
&\quad \text{$3\nmid n$, $4\nmid n$ (by Case 1, Case 2, and Case 3)},\\
 F_{n-1}, &\text{if $2\mid k$ and $3\nmid n$, $4\mid n$ (by Case 4)},\\
\left(\frac{F_{n-3}}{2}\right)^{k}, &\text{if $2\nmid k$ and $3\mid n$ (by Case 5)},\\
(-1)^n\left(\frac{F_{n-3}}{2}\right)^{k}, &\text{if $2\mid k$ and $3\mid n$ (by Case 6)}.
\end{cases}
$$
Since $2\mid k$ if and only if $2\nmid k+1$, we can write the above congruence in the following form: 
$$
\frac{G(k+1,n,1)}{F_n^{k+1}} \equiv
 \begin{cases}
 1, &\text{if $2\mid k+1$ and $3\nmid n$ or if $2\nmid k+1$ and}\\
&\quad \text{$3\nmid n$, $4\nmid n$},\\
 F_{n-1}, &\text{if $2\nmid k+1$ and $3\nmid n$, $4\mid n$},\\
\left(\frac{F_{n-3}}{2}\right)^{k}, &\text{if $2\mid k+1$ and $3\mid n$},\\
(-1)^n\left(\frac{F_{n-3}}{2}\right)^{k}, &\text{if $2\nmid k+1$ and $3\mid n$},
\end{cases}
$$
where the congruences are taken modulo $F_n$. This proves the induction step and hence the proof of the case $m=1$ and $k\geq 2$ is complete.
\item[\textbf{\underline{Case $m\geq 2$}}]
Basis Step: We let $m\geq 2$ and we will prove this statement by induction on $k$. First we will consider the case $k=2$. Similar to the proof of the Case $m=1$, we let $r = F_n^m$ and we obtain that $\frac{G(2,n,m)}{F_n^{m+1}}$ is congruent to $A+B$ modulo $F_n$, where $A = F_{n-1}^{r-1}$ and $B = \frac{1}{2}F_n(r-1)F_{n-1}^{r-2}$. If $3\nmid n$, then we can follow the same argument of the proof of Case $m=1$, and we obtain $A+B\equiv 1\pmod{F_n}$. Now assume that $3\mid n$. The process of calculation is still the same as the proof of the Case $m=1$. Only at this time, $m\geq 2$, so $F_{n-1}^{r-2} \equiv (-1)^n\pmod{F_n}$. Therefore 
$$
A+B = F_{n-1}^{r-2}\left(\frac{1}{2}F_{n-3}+\frac{1}{2}rF_n\right) \equiv \frac{(-1)^n}{2}F_{n-3}\pmod{F_n}.
$$
In conclusion, 
$$
\frac{G(2,n,m)}{F_n^{1+m}} \equiv
 \begin{cases}
 1\pmod{F_n}, &\text{if $3\nmid n$;} \\
 (-1)^n\left(\frac{F_{n-3}}{2}\right)\pmod{F_n}, &\text{if $3\mid n$.}
 \end{cases}
$$
This proves the basis step.\\
\noindent Induction Step: Next let $k\geq 2$ and assume that the statement is true for $k$. Let $r=G(k,n,m)$. Then following the same argument as in proof of the Case $m=1$, we see that $\frac{G(k+1,n,m)}{F_n^{k+m}}$ is congruent to $A+B$ modulo $F_n$, where $A = rF_n^{1-k-m}F_{n-1}^{r-1}$ and $B = {r\choose 2}F_n^{-k-m+2}F_{n-1}^{r-2}$. 
\begin{itemize}
\item [Case 1] If $2\mid k$, $3\nmid n$, and $4\nmid n$ or $2\nmid k$, $3\nmid n$, and $4\nmid n$, then we can follow the argument of Case 1 and Case 2 in the proof of the Case $m=1$. We obtain $A+B \equiv 1 \pmod{F_n}$.
\item [Case 2] If $2\nmid k$, $3\nmid n$, and $4\mid n$ or $2\mid k$, $3\nmid n$, and $4\mid n$, then the proof is similar to Case 3 in the proof of the Case $m=1$. We obtain
$$
A+B \equiv
\begin{cases}
1\pmod{F_n}, &\text{if $2\nmid k$, $3\nmid n$, and $4\mid n$};\\
F_{n-1}\pmod{F_n}, &\text{if $2\mid k$, $3\nmid n$, and $4\mid n$}.
\end{cases}
$$
\item [Case 3] If $2\mid k$ and $3\mid n$, then the proof is similar to Case 5 and we obtain $A+B \equiv \left(\frac{F_{n-3}}{2}\right)^k \pmod{F_n}$. If $2\nmid k$ and $3\mid n$, then the proof is similar to Case 6 and we have $A+B \equiv (-1)^n\left(\frac{F_{n-3}}{2}\right)^k \pmod{F_n}$. 
\end{itemize}
This completes the proof. \qed
\end{itemize}
\noindent \textbf{Proof of Corollary \ref{maincor}}.\\[0.2cm]
Let $k\geq 2$, $n\geq 4$, and $m\geq 1$ be positive integers. Since $1\not\equiv 0\pmod{F_n}$ and $F_{n-1}\not\equiv 0\pmod{F_n}$, we see that $F_n^{k+m-1}\parallel G(k,n,m)$ for every $n$ such that $3\nmid n$. So we assume that $3\mid n$. Suppose for a contradiction that
\begin{equation}\label{pfmaincoreq1}
F_n\mid \left(\frac{F_{n-3}}{2}\right)^{k-1}.
\end{equation}
Since $\gcd \left(F_n, F_{n-3}\right) = F_{\gcd \left(n, n-3\right)} = F_3 = 2$, we obtain $\gcd\left(\frac{F_n}{2},\frac{F_{n-3}}{2}\right)=1$.\\
 Then $\gcd\left(\frac{F_n}{2},\left(\frac{F_{n-3}}{2}\right)^{k-1}\right) = 1$. But from (\ref{pfmaincoreq1}), we have $\frac{F_n}{2} \mid \left(\frac{F_{n-3}}{2}\right)^{k-1}$. Therefore 
$$
1 = \gcd\left(\frac{F_n}{2},\left(\frac{F_{n-3}}{2}\right)^{k-1}\right) = \frac{F_n}{2},
$$
which implies $n=3$. This contradicts the fact that $n\geq 4$. Hence $F_n\nmid \left(\frac{F_{n-3}}{2}\right)^{k-1}$ and thus $F_n^{k+m-1}\parallel G(k,n,m)$. This proves (i). By substituting $k=2$ and $k=3$ in Theorem \ref{mainthm}, we immediately obtain (ii) and (iii). This completes the proof.\qed

\medskip

\noindent MSC2010: 11B39
\end{document}